\DeclareMathOperator{\RE}{Re}
\newtheorem{thm}{Theorem}[section]
\newtheorem{lem}[thm]{Lemma}
\theoremstyle{definition}
\newtheorem{defin}[thm]{Definition}
\newtheorem{rem}[thm]{Remark}
\numberwithin{equation}{section}
\begin{document}





\title[Differential Subordination]{Applications
of Theory of Differential Subordination for Functions with Fixed Initial Coefficient to Univalent Functions}

\author[S. Nagpal]{Sumit Nagpal}
\address{Department of Mathematics, University of Delhi,
Delhi--110 007, India}

\email{sumitnagpal.du@gmail.com }

\author{V. Ravichandran}

\address{Department of Mathematics, University of Delhi,
Delhi--110 007, India \and School of Mathematical Sciences,
Universiti Sains Malaysia, 11800 USM, Penang, Malaysia}

\email{vravi@maths.du.ac.in; vravi68@gmail.com}

\date{}

\begin{abstract}
By using the theory of first-order differential subordination  for functions with fixed initial coefficient, several well-known results for subclasses of univalent functions are improved by restricting the functions to have fixed second coefficient. The influence of the second coefficient of univalent functions is evident in the results obtained.
\end{abstract}

\subjclass[2010]{30C80}

\keywords{ Differential subordination, fixed initial coefficient, convex and starlike functions.}

\maketitle

\section{Introduction and preliminaries}\label{sec1}
It is well-known that the second coefficient of univalent functions influences many properties. For example, the bound for the second coefficient of univalent functions yields the growth and distortion estimates  as well the  Koebe constant.  Various subclasses of univalent functions with fixed second coefficients were investigated beginning with Gronwall \cite{gronwall}.  For a brief survey of these developments as well as for some radius problem, see \cite{naveen}. The necessary modifications to the theory of differential subordination to handle problems for functions with second coefficients are recently carried out in \cite{sumit}. Using the results in  \cite{sumit},   the influence of the second coefficient in certain differential implications associated with starlike and convex functions with fixed second coefficients  is investigated in this paper.

Let $p$ be an analytic function in the unit disk $\mathbb{D}=\{z \in \mathbb{C}:|z|<1\}$ and $\psi(r,s)$ be a complex function defined in a
domain of $\mathbb{C}^{2}$. Consider a class of functions $\Psi$, and two subsets $\Omega$ and $\Delta$ in $\mathbb{C}$. Given any two
quantities, the aim of the theory of first-order differential subordination is to determine the third so that the following differential implication is satisfied:
\begin{equation*}\label{subo}
\psi \in \Psi \quad \mbox{and} \quad \{\psi(p(z),z p'(z)): z \in \mathbb{D}\} \subset \Omega \quad \Rightarrow \quad p(\mathbb{D}) \subset \Delta.
\end{equation*}
Furthermore, the problem is to find ``smallest'' such $\Delta$ and ``largest'' such $\Omega$.
In \cite{sumit}, the authors proposed a new methodology by making appropriate modifications and improvements to the Miller and Mocanu's theory (see \cite{millermocanu1978,millermocanu1981} and their monograph \cite{monograph}) of second-order differential subordination and gave interesting applications of the newly formulated theory to the classes of normalized convex and starlike functions with fixed second coefficient. Let $\mathcal{H}_{\beta}[a,n]$ consist  of analytic functions $p$ of the form
\[p(z)=a+\beta z^{n}+p_{n+1}z^{n+1}+\cdots,\]
where $\beta \in \mathbb{C}$ is fixed. Without loss of generality, we assume that $\beta$ is a positive real number.
\begin{defin}\label{sec1,def1}\cite[Definition 1, p.\ 158]{millermocanu1981}
Let $Q$ be the class of functions $q$ that are analytic and injective in
$\overline{\mathbb{D}}\setminus E(q)$ where \[ E(q) := \{ \zeta \in
\partial \mathbb{D}: \lim_{z \rightarrow \zeta}
q(z)=\infty\}\] and are such that $q'(\zeta) \neq 0 $ for $\zeta\in
\partial \mathbb{D}\setminus E(q)$.
\end{defin}

\begin{defin}\label{sec1,def2}\cite[Definition 3.1, p. 616]{sumit}
Let $\Omega$ be a domain in $\mathbb{C}$, $n \in \mathbb{N}$ and $\beta >0$. Let $q \in {Q}$ be such that $ |q'(0)|\geq \beta$. The class
$\Psi_{n,\beta}(\Omega,q)$ consists of  \emph{$\beta$-admissible functions} $\, \psi:\mathbb{C}^2\rightarrow \mathbb{C}$   satisfying the following
conditions:
\begin{itemize}
  \item[(i)] $ \psi(r,s)$ is continuous in a domain $D\subset \mathbb{C}^2$,
  \item[(ii)] $(q(0),0)\in{D}$ and $\psi(q(0),0)\in{\Omega}$,
  \item[(iii)] $\psi(q(\zeta),m \zeta  q'(\zeta))\not\in{\Omega}$ whenever $(q(\zeta),m \zeta  q'(\zeta))\in{D}$, $\zeta\in
\partial \mathbb{D}\setminus E(q)$ and
  \[m \geq n+\frac{|q'(0)|-\beta }{|q'(0)|+\beta }.\]
\end{itemize}
We write $\Psi_{1,\beta}(\Omega,q)$ as $\Psi_{\beta}(\Omega,q)$.
\end{defin}

\begin{thm}\label{sec1,th3}\cite[Theorem 3.1, p. 617]{sumit}
Let $q(0)=a$, $\psi \in \Psi_{n,\beta}(\Omega,q)$ with associated
domain D, and $\beta >0$ with $ |q'(0)|\geq \beta$.
Let $p \in \mathcal{H}_{\beta}[a,n]$. If $(p(z), zp'(z))\in {D}$ for $z \in \mathbb{D}$ and
\[ \psi (p(z), zp'(z)) \in {\Omega}\quad ( z\in
 {\mathbb{D}})\]
 then $p\prec q$.
\end{thm}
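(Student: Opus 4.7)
My plan is to argue by contradiction, following the Miller--Mocanu blueprint for classical differential subordination but replacing Jack's lemma by its refinement for analytic functions with a prescribed leading coefficient. That refinement is exactly the mechanism through which $\beta$ enters the conclusion.

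Suppose $p\not\prec q$. By the usual reduction (working with $q(\rho z)$ for $\rho$ slightly less than $1$, applying the result there, and letting $\rho\to 1^-$) one can produce $z_0\in\mathbb{D}$ and $\zeta_0\in\partial\mathbb{D}\setminus E(q)$ such that $p(z_0)=q(\zeta_0)$ while $p(\{|z|<|z_0|\})\subset q(\mathbb{D})$. Since $q$ is univalent on a neighborhood of $\zeta_0$, the formula $r(z):=q^{-1}(p(z))$ defines an analytic function on a neighborhood of $\{|z|\leq|z_0|\}$ with $r(0)=0$ and $|r(z_0)|=1=\max_{|z|\leq|z_0|}|r(z)|$. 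Inverting the Taylor expansion of $q$ and substituting $p(z)=a+\beta z^{n}+\cdots$ yields
\[ r(z)=\frac{\beta}{q'(0)}\,z^{n}+O(z^{n+1}), \]
so the $n$-th Taylor coefficient of $r$ has modulus exactly $\gamma:=\beta/|q'(0)|\leq 1$.

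The key step is to invoke the sharpened Jack-type lemma from \cite{sumit}: for an analytic self-map of $\mathbb{D}$ whose first non-vanishing Taylor coefficient occurs in degree $n$ and has modulus $\gamma$, if the maximum of $|r|$ on $\{|z|\leq|z_0|\}$ is attained at $z_0$, then $z_0 r'(z_0)=m\,r(z_0)$ with real
\[ m \;\geq\; n+\frac{1-\gamma}{1+\gamma} \;=\; n+\frac{|q'(0)|-\beta}{|q'(0)|+\beta}. \]
Differentiating the identity $p(z)=q(r(z))$ at $z_0$ gives $z_0 p'(z_0)=q'(\zeta_0)\,z_0 r'(z_0)=m\,\zeta_0\,q'(\zeta_0)$. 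Admissibility condition (iii) of Definition \ref{sec1,def2} then forces $\psi(q(\zeta_0),m\zeta_0 q'(\zeta_0))\notin\Omega$, contradicting the hypothesis $\psi(p(z_0),z_0 p'(z_0))\in\Omega$.

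The substantive content is the refined Jack lemma; once that tool is in hand, the rest of the proof is a matter of recognizing that $r=q^{-1}\circ p$ has leading modulus $\beta/|q'(0)|$, so that the $\beta$-admissibility hypothesis applies precisely, and everything else is direct bookkeeping. The one subtle point is the reduction to an actual first-contact pair $(z_0,\zeta_0)$: because $q$ need not extend continuously to $\partial\mathbb{D}\setminus E(q)$, one must argue by approximation, which is standard within the Miller--Mocanu framework but worth being explicit about to verify that the approximating data stay inside the class $\mathcal{H}_{\beta}[a,n]$ when $q$ rather than $p$ is contracted.
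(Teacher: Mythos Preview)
The present paper does not prove Theorem~\ref{sec1,th3}; it is quoted from \cite[Theorem~3.1]{sumit} as a preliminary tool, so there is no in-paper proof to compare your proposal against. That said, your outline is precisely the argument one finds in \cite{sumit}: reduce to a first-contact configuration $p(z_0)=q(\zeta_0)$, set $r=q^{-1}\circ p$, read off that the $n$-th Taylor coefficient of $r$ has modulus $\beta/|q'(0)|$, apply the refined contact lemma to get $z_0 r'(z_0)=m\,r(z_0)$ with $m\ge n+\dfrac{|q'(0)|-\beta}{|q'(0)|+\beta}$, and contradict condition~(iii) of Definition~\ref{sec1,def2}. One minor imprecision worth tightening: $r$ is not literally an analytic self-map of $\mathbb{D}$ but is defined on the closed subdisc $\{|z|\le |z_0|\}$ with values in $\overline{\mathbb{D}}$, so the refined Jack-type lemma you invoke must be the version stated for that local situation (which is how \cite{sumit} formulates it), not for global Schwarz functions. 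With that adjustment the argument is correct.
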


The special case of $\Delta$ being a half plane is important in our investigation. Let $\Delta=\{w:\RE w>0\}$.  The function
\[q(z)=\frac{a+\overline{a}z}{1-z} \quad (z \in \mathbb{D})\]
where $\RE a >0$, is univalent in $\overline{\mathbb{D}}\setminus\{1\}$ and satisfies $q(\mathbb{D})=\Delta$, $q(0)=a$ and $q \in Q$. Let
$\Psi_{n,\beta}(\Omega,a):=\Psi_{n,\beta}(\Omega,q)$ and when $\Omega=\Delta$, denote the class by $\Psi_{n,\beta}\{a\}$ with $\Psi_{\beta}\{a\}:=\Psi_{1,\beta}\{a\}$.
The class $\Psi_{n,\beta}(\Omega,a)$ consists of those functions $\psi:\mathbb{C}^2\rightarrow \mathbb{C}$ that are continuous in a domain $D \subset \mathbb{C}^2$ with $(a,0) \in D$ and $\psi(a,0) \in \Omega$, and that satisfy the admissibility condition:
\begin{equation}\label{sec1,eq1}
\begin{split}
\psi(i\rho,\sigma) &\not\in \Omega \quad \mbox{whenever} \quad (i\rho,\sigma) \in D \quad \mbox{and}\quad\\
  \sigma &\leq-\frac{1}{2}\left( n+\frac{2\RE a-\beta }{2\RE a+\beta }\right)\frac{|a-i\rho|^{2}}{\RE a},
\end{split}
\end{equation}
where $\rho \in \mathbb{R}$ and $n \geq 1$.

If $a=1$, then \eqref{sec1,eq1} simplifies to
\begin{equation}\label{sec1,eq2}
\begin{split}
\psi(i\rho,\sigma) &\not\in \Omega \quad \mbox{whenever} \quad (i\rho,\sigma) \in D \quad \mbox{and}\quad\\
                                \sigma &\leq-\frac{1}{2}\left( n+\frac{2-\beta }{2+\beta }\right)
                                 (1+\rho^{2}),
\end{split}
\end{equation}
where $\rho \in \mathbb{R}$, and $n \geq 1$.

In this particular case, Theorem \ref{sec1,th3} becomes
\begin{thm}\label{sec1,th4} \cite[Theorem 3.4, p. 620]{sumit}
Let $p \in \mathcal{H}_{\beta}[a,n]$ with $\RE a>0$ and $0<\beta  \leq
2\RE a$.
\begin{enumerate}
  \item [(i)] Let $\psi \in \Psi_{n,\beta}(\Omega,a)$ with associated domain $D$. If $(p(z), zp'(z))\in {D}  $ and $\psi (p(z), zp'(z)) \in \Omega\quad ( z\in {\mathbb{D}})$,
              then $\RE p(z) >0\quad (z \in \mathbb{D})$.
  \item [(ii)] Let $\psi \in \Psi_{n,\beta}\{a\}$ with associated domain $D$. If $(p(z), zp'(z))\in {D}$ and $ \RE \psi (p(z), zp'(z))>0 \quad ( z\in {\mathbb{D}})$,
              then $\RE p(z)>0\quad (z \in \mathbb{D})$.
\end{enumerate}
\end{thm}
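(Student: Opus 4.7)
The plan is to apply Theorem \ref{sec1,th3} with the specific choice $q(z) = (a + \bar{a}z)/(1-z)$, the Möbius map carrying $\mathbb{D}$ conformally onto the right half-plane $\Delta = \{w : \RE w > 0\}$ with $q(0) = a$. I first verify that $q \in Q$ with $E(q) = \{1\}$ and compute $q'(z) = 2\RE a/(1-z)^2$, so $q'(0) = 2\RE a$. The hypothesis $0 < \beta \leq 2\RE a$ is then exactly the requirement $|q'(0)| \geq \beta$ needed to invoke Theorem \ref{sec1,th3}.

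Next I translate the abstract admissibility condition (iii) of Definition \ref{sec1,def2} into the concrete inequality \eqref{sec1,eq1}. For $\zeta \in \partial \mathbb{D} \setminus \{1\}$, the value $q(\zeta)$ lies on the imaginary axis, so write $q(\zeta) = i\rho$ with $\rho \in \mathbb{R}$. Solving $q(\zeta) = i\rho$ gives $\zeta = (i\rho - a)/(\bar{a} + i\rho)$ and $1 - \zeta = 2\RE a/(\bar{a} + i\rho)$. Substituting into $\zeta q'(\zeta) = 2\RE a \cdot \zeta/(1-\zeta)^2$ and using $(a - i\rho)(\bar{a} + i\rho) = |a - i\rho|^2$ (which is real and positive because the expression equals its own conjugate) simplifies to
\[ \zeta q'(\zeta) \;=\; -\frac{|a - i\rho|^2}{2\,\RE a}. \]
Multiplying by any admissible $m \geq n + (2\RE a - \beta)/(2\RE a + \beta)$ from Definition \ref{sec1,def2}(iii) reproduces precisely the upper bound on $\sigma$ in \eqref{sec1,eq1}. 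Hence membership $\psi \in \Psi_{n,\beta}(\Omega,q)$ coincides with the stated admissibility condition for $\Psi_{n,\beta}(\Omega,a)$.

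With this identification, Theorem \ref{sec1,th3} immediately yields $p \prec q$, i.e., $p(\mathbb{D}) \subset q(\mathbb{D}) = \Delta$, which is exactly $\RE p(z) > 0$ on $\mathbb{D}$. Part (ii) follows by specializing (i) to $\Omega = \Delta$: the hypothesis $\RE \psi(p(z), zp'(z)) > 0$ is equivalent to $\psi(p(z), zp'(z)) \in \Delta$, so the conclusion is the same.

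The only genuinely computational step is the derivation of $\zeta q'(\zeta) = -|a - i\rho|^2/(2\RE a)$; the remainder is bookkeeping that fits Theorem \ref{sec1,th3} to this canonical $q$. The main subtlety to watch for is the factor $(2\RE a - \beta)/(2\RE a + \beta)$ in the bound on $m$, which is precisely what records the influence of the fixed second coefficient $\beta$ and distinguishes the present result from the classical $\beta$-free version.
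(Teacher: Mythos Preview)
Your proposal is correct and follows precisely the route the paper indicates: the paper introduces the specific dominant $q(z)=(a+\bar a z)/(1-z)$ just before the theorem, records the admissibility condition \eqref{sec1,eq1} that results, and then states Theorem~\ref{sec1,th4} as the specialization of Theorem~\ref{sec1,th3} to this $q$ (citing the full argument to \cite{sumit}). Your write-up simply fills in the boundary computation $\zeta q'(\zeta)=-|a-i\rho|^2/(2\RE a)$ that the paper leaves implicit, so the two arguments are essentially identical.
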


\section{Applications in univalent function theory}\label{sec2}
Let $\mathcal{A}_n$ be the class consisting of analytic functions $f$ defined on
$\mathbb{D}$ of the form $f(z)=z+a_{n+1}z^{n+1}+a_{n+2}z^{n+2}+\cdots$, and $\mathcal{A}:=\mathcal{A}_1$. The class $\mathcal{S}^*(\alpha)$ of
starlike functions of order $\alpha$, $0 \leq \alpha <1$, consists of functions $f\in \mathcal{A}$ satisfying the inequality
\[ \RE\left(\frac{zf'(z)}{f(z)}\right)>\alpha \quad ( z\in
 {\mathbb{D}}). \] Similarly,  the class
$\mathcal{C}(\alpha)$ of convex functions of order $\alpha$, $0 \leq
\alpha <1$, consists of functions $f\in \mathcal{A}$ satisfying the
inequality
\[ \RE\left(1+\frac{zf''(z)}{f'(z)}\right)>\alpha \quad ( z\in
 {\mathbb{D}}). \]
When $\alpha=0$, these classes are respectively denoted by $\mathcal{S}^*$ and $\mathcal{C}$. Let $\mathcal{A}_{n,b}$ denote the class of
functions $f \in \mathcal{A}_{n}$ of  the form
\[f(z)=z+bz^{n+1}+a_{n+2}z^{n+2}+\cdots,\]
where $b$ is fixed. We write $\mathcal{A}_{1,b}$
as $\mathcal{A}_{b}$.

There are many differential inequalities in classical analysis for which the differential operator is required to have positive real part. A typical example is the Marx-Strohh\"{a}cker result, which states that if $ f \in \mathcal{A}$, then
\[\RE \left(\frac{z f''(z)}{f'(z)}+1\right) >0 \quad(z\in \mathbb{D})\quad \Rightarrow  \quad \RE \frac{z f'(z)}{f(z)}>\frac{1}{2}\quad (z\in \mathbb{D}).\]
A natural problem is to extend the result by finding a domain $D$ containing the right half-plane so that
\[\frac{z f''(z)}{f'(z)}+1 \in D  \quad(z\in \mathbb{D})\quad \Rightarrow \quad \RE \frac{z f'(z)}{f(z)}>\frac{1}{2} \quad(z\in \mathbb{D}).\]
 The domain $D$ cannot be taken as the half-plane $\{w\in\mathbb{C}:\RE w>\alpha\}$, with  $ \alpha<0$,  for functions $f \in \mathcal{A}$. (For a counter example, see \cite{example}.) However, it is possible to take such a $D$ for functions $f \in \mathcal{A}_{b}$. To prove this result, we shall need the following lemma proved by Ozaki.

\begin{lem}\label{lem} \cite{ozaki}
If $f \in \mathcal{A}$ satisfies
\[\RE \left(\frac{z f''(z)}{f'(z)}+1\right) >-\frac{1}{2}\quad (z\in \mathbb{D}),\]
then $f$ is univalent in $\mathbb{D}$.
\end{lem}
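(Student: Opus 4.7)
The plan is to reduce this classical univalence criterion of Ozaki to Kaplan's integral characterization of close-to-convex functions, and then invoke the standard fact that every close-to-convex function is univalent. Recall Kaplan's theorem: $f \in \mathcal{A}$ is close-to-convex in $\mathbb{D}$ if and only if, for every $r \in (0,1)$ and every pair of angles $\theta_{1}<\theta_{2}$ with $\theta_{2}-\theta_{1}\leq 2\pi$,
\[
\int_{\theta_{1}}^{\theta_{2}}\RE\!\left(1+\frac{zf''(z)}{f'(z)}\right)d\theta \;>\; -\pi, \qquad z=re^{i\theta}.
\]

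First, I would apply the pointwise hypothesis $\RE(1+zf''(z)/f'(z))>-1/2$ directly inside the Kaplan integrand. Integrating this strict inequality over any arc $[\theta_{1},\theta_{2}]$ with $\theta_{2}-\theta_{1}\leq 2\pi$ gives
\[
\int_{\theta_{1}}^{\theta_{2}}\RE\!\left(1+\frac{zf''(z)}{f'(z)}\right)d\theta \;>\; -\frac{1}{2}(\theta_{2}-\theta_{1}) \;\geq\; -\pi,
\]
the strictness being preserved because the integrand is continuous and strictly bounded below by $-1/2$ on an arc of positive length. Hence Kaplan's condition holds for every admissible choice of $r,\theta_{1},\theta_{2}$.

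Second, Kaplan's theorem then yields that $f$ is close-to-convex in $\mathbb{D}$, and since every close-to-convex function is univalent, the proof is complete. The numerical match between the pointwise bound $-1/2$ and the maximal arclength $2\pi$ is exactly what makes the estimate sharp; any weaker pointwise bound would fail Kaplan's condition for arcs close to the full circle, which is consistent with the well-known sharpness of the constant $-1/2$ in Ozaki's criterion.

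The main obstacle, such as it is, amounts to invoking the correct classical tool: once Kaplan's characterization is at hand, the verification is a one-line estimate. A more self-contained alternative would be to argue by contradiction, assuming $f(z_{1})=f(z_{2})$ for distinct $z_{1},z_{2}\in\mathbb{D}$ and attempting to show $\int_{0}^{1}f'((1-t)z_{1}+tz_{2})\,dt\neq 0$ by controlling the variation of $\arg f'$ along the chord joining $z_{1}$ and $z_{2}$; this route demands a delicate analysis of the logarithmic derivative of $f'$ along chords, so the reduction via Kaplan is decidedly the efficient path.
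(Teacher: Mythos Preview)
The paper does not supply its own proof of this lemma; it is stated with a citation to Ozaki \cite{ozaki} and used as a black box in the proof of Theorem~\ref{sec2,th2}. So there is no in-paper argument to compare against.

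Your reduction via Kaplan's integral characterization is correct and is in fact the standard modern proof of this criterion. The one tacit point you might make explicit is that the hypothesis forces $f'(z)\neq 0$ throughout $\mathbb{D}$: if $f'$ vanished at some $z_{0}\in\mathbb{D}$, then $1+zf''(z)/f'(z)$ would have a pole there and its real part could not stay bounded below. With $f'$ nonvanishing the integrand in Kaplan's condition is continuous on each circle $|z|=r$, your strict pointwise lower bound $-\tfrac{1}{2}$ integrates to give strictly more than $-\pi$ over any arc of length at most $2\pi$, and Kaplan's theorem then yields close-to-convexity and hence univalence. Your closing remark about sharpness is also accurate.
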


\begin{thm}\label{sec2,th2}
If $f \in \mathcal{A}_{b}$ with $|b| \leq 1$, then the following implication holds:
\[\RE \left(\frac{z f''(z)}{f'(z)}+1\right) >\frac{|b|-1}{2(|b|+1)} \quad (z\in \mathbb{D})\quad \Rightarrow \quad \RE \frac{z f'(z)}{f(z)}>\frac{1}{2}\quad (z\in \mathbb{D}).\]
\end{thm}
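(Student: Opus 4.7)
The plan is to recast the desired implication as a first-order differential subordination and then apply Theorem~\ref{sec1,th4}(ii). First I would note that $(|b|-1)/(2(|b|+1))\ge -\tfrac{1}{2}$ for every $|b|\le 1$, so the hypothesis is stronger than that of Lemma~\ref{lem}; hence $f$ is univalent in $\mathbb{D}$, in particular $f(z)/z$ is zero-free and (since the bound on $\RE(zf''/f'+1)$ forces $f'\ne 0$) the function $p(z):=zf'(z)/f(z)$ is analytic on $\mathbb{D}$. Writing $f(z)=z+bz^{2}+\cdots$ one computes $p(z)=1+bz+\cdots$, and via the standard rotation $f(z)\mapsto e^{-i\theta}f(e^{i\theta}z)$, which preserves both the hypothesis and the conclusion, we may assume $b=|b|\ge 0$.

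Next, set $q(z):=2p(z)-1$, so that $q\in\mathcal{H}_{2b}[1,1]$; the required bound $2b\le 2\RE(1)$ is exactly $|b|\le 1$, and the conclusion $\RE p>\tfrac12$ is equivalent to $\RE q>0$. Using the familiar identity $zf''(z)/f'(z)+1=p(z)+zp'(z)/p(z)$ and substituting $p=(1+q)/2$, I rewrite the hypothesis as $\RE\psi(q(z),zq'(z))>0$ on $\mathbb{D}$, where
\[\psi(r,s):=\frac{1+r}{2}+\frac{s}{1+r}-\frac{b-1}{2(b+1)},\]
defined on $D=\{(r,s)\in\mathbb{C}^{2}:r\ne -1\}$. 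Since $p$ is zero-free in $\mathbb{D}$, we have $(q(z),zq'(z))\in D$, and it remains only to verify $\psi\in\Psi_{2b}\{1\}$ in order to invoke Theorem~\ref{sec1,th4}(ii).

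The decisive step is the admissibility check \eqref{sec1,eq2} with $a=1$, $n=1$, $\beta=2b$. A direct computation gives, for $\rho\in\mathbb{R}$,
\[\RE\psi(i\rho,\sigma)=\tfrac{1}{2}+\frac{\sigma}{1+\rho^{2}}-\frac{b-1}{2(b+1)},\]
while the bound in \eqref{sec1,eq2} reduces (using $1+(2-2b)/(2+2b)=2/(1+b)$) to $\sigma\le -(1+\rho^{2})/(1+b)$; inserting this yields $\RE\psi(i\rho,\sigma)\le \tfrac{1}{2}-\tfrac{1}{1+b}-\tfrac{b-1}{2(b+1)}=0$, so $\psi(i\rho,\sigma)\notin\{\RE w>0\}$, as required. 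Theorem~\ref{sec1,th4}(ii) then gives $\RE q>0$, i.e.\ $\RE(zf'(z)/f(z))>\tfrac12$.

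The main obstacle is essentially this admissibility calculation: the threshold $(|b|-1)/(2(|b|+1))$ is calibrated precisely so that the last estimate becomes an equality at the boundary of the admissibility region, which both explains why exactly this constant appears and shows that the argument is sharp with respect to the present method. The other ingredients—the rotation reducing to real $b$, the substitution $q=2p-1$, and the identity $zf''/f'+1=p+zp'/p$—are routine once the strategy is in place.
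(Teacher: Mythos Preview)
Your proof is correct and follows essentially the same route as the paper: your auxiliary function $q=2p-1=2zf'/f-1$ is exactly the paper's $p$, the function $\psi$ and the admissibility computation are identical, and both arguments conclude via Theorem~\ref{sec1,th4}(ii). The only cosmetic difference is that you make the rotation to real nonnegative $b$ explicit, whereas the paper handles this by the standing convention (stated in Section~\ref{sec1}) that the initial coefficient $\beta$ in $\mathcal{H}_\beta[a,n]$ may be assumed positive real.
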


\begin{proof}
If we set
\begin{equation}\label{sec2,eq1}
\alpha:=\frac{|b|-1}{2(|b|+1)}
\end{equation}
then $\alpha \in [-1/2,0]$.
Define the function $p:\mathbb{D} \rightarrow  \mathbb{C}$ by
\[p(z):=2\frac{zf'(z)}{f(z)}-1.\]
Using Lemma \ref{lem}, it follows that $f$ is univalent and hence
\[p(z)=1+2bz+\cdots\]
is analytic in $\mathbb{D}$. Thus $p \in \mathcal{H}_{2b}[1,1]$ and satisfies
\begin{equation}\label{sec2,eq2}
\frac{zf''(z)}{f'(z)}+1-\alpha=\frac{p(z)+1}{2}+\frac{zp'(z)}{p(z)+1}-\alpha=\psi(p(z),zp'(z))\quad (z \in \mathbb{D})
\end{equation}
where
\[\psi(r,s):=\frac{r+1}{2}+\frac{s}{r+1}-\alpha,\]
and $\alpha$ is given by \eqref{sec2,eq1}.
The function $\psi$ is continuous in the domain $D=(\mathbb{C}\backslash \{-1\})\times \mathbb{C}$, $(1,0) \in D$ and
\[\RE \psi(1,0)=1-\alpha >0,\]
as $\alpha \in [-1/2,0]$. We need to show that the admissibility condition \eqref{sec1,eq2} is satisfied. Since
\[\psi(i\rho,\sigma)=\frac{i\rho+1}{2}+\frac{\sigma}{1+\rho^{2}}(1-i\rho)-\alpha\]
we have
\begin{align*}
\RE \psi(i\rho,\sigma)&=\frac{1}{2}+\frac{\sigma}{1+\rho^{2}}-\alpha\\
                      & \leq \frac{1}{2}-\frac{1}{2}\left(1+\frac{2-2|b|}{2+2|b|}\right)-\alpha\\
                      &=\frac{|b|-1}{2(|b|+1)}-\alpha=0,
\end{align*}
whenever $\rho \in \mathbb{R}$ and
\[\sigma \leq -\frac{1}{2}\left(1+\frac{2-\beta}{2+\beta}\right)(1+\rho^{2}),\quad \beta=2|b|.\]
Thus $\psi \in \Psi_{2b}\{1\}$.

From the hypothesis and \eqref{sec2,eq2}, we obtain
\[\RE \psi(p(z),zp'(z)) >0 \quad ( z \in \mathbb{D}).\]
Therefore, by applying     Theorem \ref{sec1,th4} (ii), we conclude that $p$ satisfies
\[\RE p(z) >0 \quad ( z \in \mathbb{D}).\]
This is equivalent to
\[\RE \frac{z f'(z)}{f(z)}>\frac{1}{2}\quad (z\in \mathbb{D}).\qedhere\]
\end{proof}

\begin{rem}\label{sec2,rem3} For $|b|=1$,  Theorem \ref{sec2,th2} reduces to  \cite[ Theorem 2.6a, p.\ 57]{monograph}.
Also, if $|b|=0$ then $f \in \mathcal{A}_{2}$ and $\alpha=-1/2$. Therefore, Theorem \ref{sec2,th2} reduces to   \cite[ Theorem 2.6i, p.\ 68]{monograph} in this case.
\end{rem}


\begin{thm}\label{sec2,th5}
If $f \in \mathcal{A}_{b}$ with $|b| \leq 1$, then the following implication holds:
\[\RE \left(\frac{z f''(z)}{f'(z)}+1\right) >\frac{|b|-1}{|b|+1}\quad (z\in \mathbb{D}) \quad \Rightarrow \quad \RE \sqrt{f'(z)}>\frac{1}{2}\quad (z\in \mathbb{D}),\]
where the branch of the square root is so chosen that $\sqrt{1}=1$.
\end{thm}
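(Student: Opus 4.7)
The plan is to parallel the proof of Theorem \ref{sec2,th2}, but replacing the starlike ratio by the square root of $f'$. I would start by setting
\[p(z) := 2\sqrt{f'(z)} - 1.\]
The hypothesis $\RE(zf''(z)/f'(z)+1) > (|b|-1)/(|b|+1)$ forces $f'$ to be analytic and nonvanishing on the simply connected domain $\mathbb{D}$, so $\sqrt{f'}$ is a well-defined analytic function with the normalization $\sqrt{f'(0)} = 1$. Since $f'(z) = 1 + 2bz + \cdots$, one has $\sqrt{f'(z)} = 1 + bz + \cdots$, hence $p(z) = 1 + 2bz + \cdots$ and $p \in \mathcal{H}_{2b}[1,1]$.

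Next, from $f'(z) = \bigl((p(z)+1)/2\bigr)^2$ logarithmic differentiation gives $f''(z)/f'(z) = 2p'(z)/(p(z)+1)$, so with
\[\alpha := \frac{|b|-1}{|b|+1} \in [-1,0], \qquad \psi(r,s) := \frac{2s}{r+1}+1-\alpha,\]
one obtains the key identity
\[\frac{zf''(z)}{f'(z)} + 1 - \alpha \;=\; \psi(p(z), zp'(z)).\]
The function $\psi$ is continuous on $D = (\mathbb{C}\setminus\{-1\})\times\mathbb{C}$, with $(1,0)\in D$ and $\RE\psi(1,0) = 1-\alpha > 0$.

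The quantitative step is checking the admissibility condition \eqref{sec1,eq2} with $\beta = 2|b|$. A direct computation yields $\RE\psi(i\rho,\sigma) = 2\sigma/(1+\rho^2) + 1 - \alpha$; the prescribed bound on $\sigma$ simplifies to $\sigma \leq -(1+\rho^2)/(1+|b|)$, which upon substitution gives
\[\RE\psi(i\rho,\sigma) \;\leq\; -\frac{2}{1+|b|} + 1 - \alpha \;=\; \frac{|b|-1}{|b|+1} - \alpha \;=\; 0.\]
Hence $\psi \in \Psi_{2b}\{1\}$. Since the hypothesis translates to $\RE\psi(p(z), zp'(z)) > 0$, Theorem \ref{sec1,th4}(ii) yields $\RE p(z) > 0$ on $\mathbb{D}$, which is precisely $\RE\sqrt{f'(z)} > 1/2$.

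I do not expect any serious obstacle: the admissibility bound is tailored to match $\alpha$ exactly, and the rest is the standard dictionary of Theorem \ref{sec1,th4}. The only genuinely subtle point is the edge case $|b|=0$, where $\beta = 2|b| = 0$ falls outside the hypothesis $\beta > 0$ of Theorem \ref{sec1,th4}; this is handled either by a limiting argument ($|b|\to 0^+$) or by observing that then $p \in \mathcal{H}[1,2]$ and invoking the classical Miller--Mocanu theory with $n=2$.
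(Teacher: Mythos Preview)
Your proposal is correct and follows essentially the same approach as the paper's own proof: the same choice of $p$, the same $\psi$, the same admissibility computation, and the same appeal to Theorem~\ref{sec1,th4}(ii). Your explicit treatment of the edge case $|b|=0$ (where $\beta=0$ falls outside the stated hypothesis of Theorem~\ref{sec1,th4}) is in fact more careful than the paper, which passes over this point silently.
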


\begin{proof}Set
\begin{equation}\label{sec2,eq3}
\alpha:=\frac{|b|-1}{|b|+1}.
\end{equation}
Then  $\alpha \in [-1,0]$.   Define the function $p:\mathbb{D} \rightarrow  \mathbb{C}$ by
\[p(z):=2\sqrt{f'(z)}-1.\]
Using the hypothesis, it follows that the function
\[p(z)=1+2bz+\cdots\]
is analytic in $\mathbb{D}$. Thus $p \in \mathcal{H}_{2b}[1,1]$ and satisfies
\begin{equation}\label{sec2,eq4}
\frac{zf''(z)}{f'(z)}+1-\alpha=1+\frac{2zp'(z)}{p(z)+1}-\alpha=\psi(p(z),zp'(z))\quad (z \in \mathbb{D})
\end{equation}
where
\[\psi(r,s):=1+\frac{2s}{r+1}-\alpha\]
and $\alpha$ is given by \eqref{sec2,eq3}. The function $\psi$ is continuous in the domain $D=(\mathbb{C}\backslash \{-1\})\times \mathbb{C}$, $(1,0) \in D$ and
\[\RE \psi(1,0)=1-\alpha >0,\]
as $\alpha \in [-1,0]$. We now show that the admissibility condition \eqref{sec1,eq2} is satisfied. Since
\[\psi(i\rho,\sigma)=1+\frac{2\sigma}{1+\rho^{2}}(1-i\rho)-\alpha\]
we have
\begin{align*}
\RE \psi(i\rho,\sigma)&=1+\frac{2\sigma}{1+\rho^{2}}-\alpha\\
                      & \leq 1-\left(1+\frac{2-2|b|}{2+2|b|}\right)-\alpha\\
                      & =\frac{|b|-1}{|b|+1}-\alpha=0,
\end{align*}
whenever $\rho \in \mathbb{R}$ and
\[\sigma \leq -\frac{1}{2}\left(1+\frac{2-\beta}{2+\beta}\right)(1+\rho^{2}), \quad \beta=2|b|.\]
Thus $\psi \in \Psi_{2b}\{1\}$.

From the hypothesis and \eqref{sec2,eq4}, we obtain
\[\RE \psi(p(z),zp'(z)) >0 \quad ( z \in \mathbb{D}).\]
Therefore, by applying  Theorem \ref{sec1,th4} (ii), we conclude that $p$ satisfies
\[\RE p(z) >0 \quad ( z \in \mathbb{D}).\]
This is equivalent to
\[\RE \sqrt{f'(z)} >\frac{1}{2}\quad (z\in \mathbb{D}).\qedhere\]
\end{proof}

\begin{rem}\label{sec2,rem6}
If $|b|=1$, then $\alpha=0$ and  Theorem \ref{sec2,th5} reduces to \cite[Theorem 2.6a, p.\ 57]{monograph}.
\end{rem}


\begin{thm}\label{sec2,th7}
If $f \in \mathcal{A}_{b}$ with $|b| \leq 1$, then the following implication holds:
\[\RE \frac{zf'(z)}{f(z)} >\frac{|b|}{|b|+1}\quad (z\in \mathbb{D}) \quad \Rightarrow \quad \RE \frac{f(z)}{z}>\frac{1}{2}\quad (z\in \mathbb{D}).\]
\end{thm}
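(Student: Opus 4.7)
The plan is to follow the same template as in the proofs of Theorems~\ref{sec2,th2} and \ref{sec2,th5}: introduce the function
\[p(z):=2\frac{f(z)}{z}-1,\]
and observe that since $f \in \mathcal{A}_b$, one has $p(z)=1+2bz+\cdots$, so $p \in \mathcal{H}_{2b}[1,1]$. A direct computation will give the identity
\[\frac{zf'(z)}{f(z)}=1+\frac{zp'(z)}{p(z)+1},\]
so the operator of interest is automatically of the form $\psi(p(z),zp'(z))$ after subtracting the target constant.

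Next, I would set $\alpha:=|b|/(|b|+1)\in[0,1/2]$ and define
\[\psi(r,s):=1+\frac{s}{r+1}-\alpha,\]
which is continuous on $D=(\mathbb{C}\setminus\{-1\})\times\mathbb{C}$, with $(1,0)\in D$ and $\RE\psi(1,0)=1-\alpha>0$. The hypothesis then reads exactly $\RE\psi(p(z),zp'(z))>0$ on $\mathbb{D}$.

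The only real computation is the admissibility check \eqref{sec1,eq2} with $\beta=2|b|$ and $n=1$. One has
\[\RE\psi(i\rho,\sigma)=1+\frac{\sigma}{1+\rho^2}-\alpha,\]
and the admissibility bound
\[\sigma\leq-\frac{1}{2}\!\left(1+\frac{2-2|b|}{2+2|b|}\right)(1+\rho^2)=-\frac{1+\rho^2}{1+|b|}\]
forces $\sigma/(1+\rho^2)\leq -1/(1+|b|)$, whence $\RE\psi(i\rho,\sigma)\leq 1-\tfrac{1}{1+|b|}-\tfrac{|b|}{1+|b|}=0$. Thus $\psi\in\Psi_{2b}\{1\}$.

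With admissibility in hand, Theorem~\ref{sec1,th4}(ii) yields $\RE p(z)>0$ on $\mathbb{D}$, which is equivalent to $\RE f(z)/z>1/2$. I do not anticipate a genuine obstacle: the only thing to watch is that the algebra collapses so that the admissibility bound matches $\alpha$ exactly, which it does because of the cancellation $1/(1+|b|)+|b|/(1+|b|)=1$; this is the same cancellation that made the previous two theorems go through, now in its cleanest form.
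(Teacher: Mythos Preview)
Your proposal is correct and matches the paper's proof essentially line for line: the same auxiliary function $p(z)=2f(z)/z-1$, the same $\psi(r,s)=1+s/(r+1)-\alpha$, and the same admissibility check via \eqref{sec1,eq2} with $\beta=2|b|$. The only cosmetic difference is that you simplify $\tfrac12\bigl(1+\tfrac{2-2|b|}{2+2|b|}\bigr)$ to $1/(1+|b|)$ before substituting, whereas the paper carries the unsimplified expression one step longer; the arithmetic and the conclusion are identical.
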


\begin{proof}
Setting
\begin{equation}\label{sec2,eq5}
\alpha:=\frac{|b|}{|b|+1},
\end{equation}
it is seen that  $\alpha \in [0,1/2]$. Define the function $p:\mathbb{D} \rightarrow  \mathbb{C}$ by
\[p(z):=2\frac{f(z)}{z}-1.\]
Since $f \in \mathcal{A}_{b}$, the function
\[p(z)=1+2bz+\cdots\]
is analytic in $\mathbb{D}$. Thus $p \in \mathcal{H}_{2b}[1,1]$ and satisfies
\begin{equation}\label{sec2,eq6}
\frac{zf'(z)}{f(z)}-\alpha=1+\frac{zp'(z)}{p(z)+1}-\alpha=\psi(p(z),zp'(z))\quad (z \in \mathbb{D})
\end{equation}
where
\[\psi(r,s):=1+\frac{s}{r+1}-\alpha,\]
and $\alpha$ is given by \eqref{sec2,eq5}.
The function $\psi$ is continuous in the domain $D=(\mathbb{C}\backslash \{-1\})\times \mathbb{C}$, $(1,0) \in D$ and
\[\RE \psi(1,0)=1-\alpha >0,\]
as $\alpha \in [0,1/2]$. We now show that the admissibility condition \eqref{sec1,eq2} is satisfied. Since
\[\psi(i\rho,\sigma)=1+\frac{\sigma}{1+\rho^{2}}(1-i\rho)-\alpha\]
we have
\begin{align*}
\RE \psi(i\rho,\sigma)&=1+\frac{\sigma}{1+\rho^{2}}-\alpha\\
                      & \leq 1-\frac{1}{2}\left(1+\frac{2-2|b|}{2+2|b|}\right)-\alpha\\
                      & =\frac{|b|}{|b|+1}-\alpha=0,
\end{align*}
whenever $\rho \in \mathbb{R}$ and
 \[\sigma \leq -\frac{1}{2}\left(1+\frac{2-\beta}{2+\beta}\right)(1+\rho^{2}),\quad \beta=2|b|.\]
Thus $\psi \in \Psi_{2b}\{1\}$.

From the hypothesis and \eqref{sec2,eq6}, we obtain
\[\RE \psi(p(z),zp'(z)) >0 \quad( z \in \mathbb{D}).\]
Therefore, by applying  Theorem \ref{sec1,th4} (ii),  we conclude that $p$ satisfies
\[\RE p(z) >0 \quad (z \in \mathbb{D}).\]
This is equivalent to
\[\RE \frac{f(z)}{z} >\frac{1}{2}\quad (z\in \mathbb{D}).\qedhere\]
\end{proof}

\begin{rem}\label{sec2,rem8}
If $|b|=1$ then $\alpha=1/2$ and Theorem \ref{sec2,th7} reduces to \cite[Theorem 2.6a, p.\ 57]{monograph}.
\end{rem}


\begin{thm}\label{sec2,th9}
If $f \in \mathcal{A}_{b}$ is locally univalent with $|b| \leq 1$, then the following implication holds:
\[\RE \sqrt{f'(z)}>\sqrt{\frac{1+|b|}{8}} \quad (z\in \mathbb{D})\quad \Rightarrow \quad \RE \frac{f(z)}{z} >\frac{1}{2}\quad (z\in \mathbb{D}),\]
where the branch of the square root is so chosen that $\sqrt{1}=1$.
\end{thm}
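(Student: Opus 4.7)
The plan is to translate the desired conclusion into a positive-real-part statement for the auxiliary function
\[p(z):=\frac{2f(z)}{z}-1=1+2bz+\cdots,\]
which lies in $\mathcal{H}_{2b}[1,1]$, and then to invoke Theorem~\ref{sec1,th4}(ii) with $\beta=2|b|$ and $n=1$. Setting $\alpha:=\sqrt{(1+|b|)/8}\in[1/(2\sqrt{2}),1/2]$, the relation $f(z)=z(p(z)+1)/2$ gives
\[f'(z)=\frac{p(z)+zp'(z)+1}{2},\]
so the natural admissible function should be
\[\psi(r,s):=\sqrt{\frac{r+s+1}{2}}-\alpha,\]
with the principal branch of the square root.

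Before checking admissibility I would settle the branch question. Since $\RE\sqrt{f'(z)}>\alpha>0$ on $\mathbb{D}$, the image $f'(\mathbb{D})$ must avoid $(-\infty,0]$ (otherwise $\sqrt{f'(z)}$ would be purely imaginary or zero somewhere), so the distinguished branch of $\sqrt{f'}$ with $\sqrt{1}=1$ agrees with the principal square root of $f'(z)$ throughout $\mathbb{D}$. Consequently $\psi$ is continuous on the domain $D:=\{(r,s)\in\mathbb{C}^2:r+s+1\notin(-\infty,0]\}$, this set contains $(1,0)$ and also contains every $(p(z),zp'(z))$, one has $\psi(1,0)=1-\alpha>0$, and the hypothesis translates to $\RE\psi(p(z),zp'(z))>0$ on $\mathbb{D}$.

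The main computational step will be the admissibility inequality \eqref{sec1,eq2}, which, with $\beta=2|b|$, requires $\RE\psi(i\rho,\sigma)\le 0$ whenever $\sigma\le-(1+\rho^2)/(1+|b|)$. Writing $w=(1+\sigma+i\rho)/2$ with $u=\RE w$, $v=\IM w$, the principal-branch identity $\RE\sqrt{w}=\sqrt{(|w|+u)/2}$ together with the monotonicity of $u\mapsto\sqrt{u^2+v^2}+u$ lets me restrict to the extremal value $\sigma=-(1+\rho^2)/(1+|b|)$, at which $u=(|b|-\rho^2)/[2(1+|b|)]$ and a short computation gives $u^2+v^2=(1+\rho^2)(|b|^2+\rho^2)/[4(1+|b|)^2]$. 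The required inequality then becomes
\[\sqrt{(1+\rho^2)(|b|^2+\rho^2)}\le\frac{1+|b|^2}{2}+\rho^2,\]
and squaring collapses it to $(1-|b|^2)^2\ge 0$, which is automatic. Hence $\psi\in\Psi_{2b}\{1\}$, and Theorem~\ref{sec1,th4}(ii) delivers $\RE p(z)>0$, i.e., $\RE f(z)/z>1/2$.

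The trickiest step, in my view, is the branch bookkeeping: one must be certain that the scalar function $\sqrt{f'(z)}$ built into the hypothesis is genuinely the principal square root of $(p(z)+zp'(z)+1)/2$ along the path traced by $(p(z),zp'(z))$, so that $\psi$ is single-valued and continuous on $D$. Once that is in place, the admissibility calculation is essentially algebraic and collapses to the identity $(1-|b|^2)^2\ge 0$ after squaring, so the two known endpoints $|b|=0$ and $|b|=1$ both appear as equality cases of a single inequality.
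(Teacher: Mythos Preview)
Your proof is correct and follows essentially the same route as the paper: the same auxiliary function $p(z)=2f(z)/z-1$, the same $\psi(r,s)=\sqrt{(r+s+1)/2}-\alpha$, and the same appeal to Theorem~\ref{sec1,th4}(ii). Your treatment is in fact a bit cleaner---you handle the branch/domain issue explicitly and reduce the admissibility check to the single inequality $(1-|b|^2)^2\ge 0$, whereas the paper reaches the same conclusion via an AM--GM step applied at the extremal $\sigma$.
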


\begin{proof}
To begin with, note that if we set
\begin{equation}\label{sec2,eq7}
\alpha:=\sqrt{\frac{1+|b|}{8}},
\end{equation}
then $\alpha \in [1/2\sqrt{2},1/2]$.
Define the function $p:\mathbb{D} \rightarrow  \mathbb{C}$ by
\[p(z):=\frac{2f(z)}{z}-1.\]
Since $f \in \mathcal{A}_{b}$, the function
\[p(z)=1+2bz+\cdots\]
is analytic in $\mathbb{D}$. Thus $p \in \mathcal{H}_{2b}[1,1]$ and satisfies
\begin{equation}\label{sec2,eq8}
\sqrt{f'(z)}-\alpha=\sqrt{\frac{zp'(z)+p(z)+1}{2}}-\alpha=\psi(p(z),zp'(z))\quad (z \in \mathbb{D})
\end{equation}
where
\[\psi(r,s):=\sqrt{\frac{r+s+1}{2}}-\alpha,\]
and $\alpha$ is given by \eqref{sec2,eq7}. The function $\psi$ is continuous in the domain $D=\mathbb{C}^{2}$, $(1,0) \in D$ and
\[\RE \psi(1,0)=1-\alpha >0,\]
as $\alpha \in [1/2\sqrt{2},1/2]$. We now show that the following admissibility condition holds:
\begin{equation}\label{sec2,eq9}
\RE \psi(i\rho,\sigma)=\RE \sqrt{\frac{i\rho+\sigma+1}{2}}-\alpha \leq 0,
\end{equation}
whenever $\rho \in \mathbb{R}$ and
\[\sigma \leq -\frac{1}{2}\left(1+\frac{2-\beta}{2+\beta}\right)(1+\rho^{2}),\quad \beta=2|b|.\]
If we let $\zeta=\xi+i\eta=(1+\sigma+i\rho)/2$, and using the conditions on $\rho$ and $\sigma$, we obtain
  \begin{align*}
   \xi=\frac{1+\sigma}{2} &\leq \frac{1}{2}\left[1-\frac{1}{2}\left(1+\frac{2-2|b|}{2+2|b|}\right)(1+\rho^{2})\right]\\
                          &=\frac{1}{2}\left[1-\frac{1}{1+|b|}(1+\rho^{2})\right]\\
                          &=\frac{1}{2(1+|b|)}(|b|-4\eta^{2}).
  \end{align*}
  This implies that $\zeta$ is a point inside the parabola
  \[\eta^{2}=-\frac{(1+|b|)}{2}\left[\xi-\frac{|b|}{2(1+|b|)}\right]\]
  and
  \[\RE \sqrt{\zeta}=\RE \sqrt{\xi+i\eta}=\sqrt{\frac{\xi+\sqrt{\xi^{2}+\eta^{2}}}{2}}.\]
  Since
 \begin{align*}
  \xi^{2}+\eta^{2} &\leq \frac{1}{4(1+|b|)^{2}}(|b|-4\eta^{2})^{2}+\eta^{2}\\
                   &=\frac{1}{4(1+|b|)^{2}}(1+4\eta^{2})(|b|^{2}+4\eta^{2})
 \end{align*}
  and using the fact that the geometric mean is less than or equal to the arithmetic mean, we have
  \begin{align*}
  \sqrt{ \xi^{2}+\eta^{2}}  &=\frac{1}{2(1+|b|)}\sqrt{(1+4\eta^{2})(|b|^{2}+4\eta^{2})}\\
                                &\leq \frac{1}{4(1+|b|)}[1+8\eta^{2}+|b|^{2}]
  \end{align*}
  so that
  \begin{align*}
  \xi+\sqrt{ \xi^{2}+\eta^{2}} &\leq \frac{1}{2(1+|b|)}(|b|-4\eta^{2})+\frac{1}{4(1+|b|)}[1+8\eta^{2}+|b|^{2}]\\
                               &=\frac{1+|b|}{4}.
  \end{align*}
  Thus
  \[\RE \sqrt{\zeta}- \sqrt{\frac{1+|b|}{8}} \leq 0.\]
  This is exactly the admissibility condition given in \eqref{sec2,eq9}.
Thus $\psi \in \Psi_{2b}\{1\}$.

From the hypothesis and \eqref{sec2,eq8}, we obtain
\[\RE \psi(p(z),zp'(z)) >0 \quad(z \in \mathbb{D}).\]
Therefore, by applying part $(ii)$ of Theorem \ref{sec1,th4} we conclude that $p$ satisfies
\[\RE p(z) >0 \quad ( z \in \mathbb{D}).\]
This is equivalent to
\[\RE \frac{f(z)}{z} >\frac{1}{2}\quad (z\in \mathbb{D}).\qedhere\]
\end{proof}

\begin{rem}\label{sec2,rem10}
If $|b|=1$, then $\alpha=1/2$ and  Theorem \ref{sec2,th9} reduces to \cite[Theorem 2.6a, p.\ 57]{monograph}.
\end{rem}


\section{Two Sufficient conditions for Starlikeness}\label{sec3}
In 1989, Nunokawa \cite{nunokawa} gave the following sufficient condition for  starlikeness: if $f \in \mathcal{A}$, then
\[\RE\left( \frac{zf''(z)}{f'(z)}+1\right)< \frac{3}{2}\quad (z\in \mathbb{D}) \quad \Rightarrow \quad 0<\RE \frac{zf'(z)}{f(z)}<\frac{4}{3}\quad (z\in \mathbb{D}).\]
We will improve this result for a function $f \in \mathcal{A}_{b}$.

\begin{thm}\label{sec3,th1}
If $f \in \mathcal{A}_{b}$ satisfies
\[\RE \left(\frac{zf''(z)}{f'(z)}+1\right) <\frac{3}{2}\quad (z\in \mathbb{D}),\]
then
\[\left|\frac{zf'(z)}{f(z)}-\alpha\right| <\alpha\quad (z\in \mathbb{D}),\]
where $\alpha$ is given by
\begin{equation}\label{sec3,eq1}
\alpha:=\frac{3(|b|+6)+\sqrt{9|b|^{2}+28|b|+4}}{8(|b|+4)}.
\end{equation}
In particular,
\[0<\RE \frac{zf'(z)}{f(z)}<2\alpha\quad (z\in \mathbb{D}).\]
\end{thm}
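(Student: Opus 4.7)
The plan is to recast the implication as a first-order differential subordination problem and apply Theorem~\ref{sec1,th3} with $p(z):=zf'(z)/f(z)$ and a subordinant $q$ whose image is the target disk $D_{\alpha}:=\{w:|w-\alpha|<\alpha\}$. I would first observe that $p$ is analytic in $\mathbb{D}$ (the hypothesis forces $f'$ to be non-vanishing there, and a standard argument handles the possible zeros of $f(z)/z$), write $p(z)=1+bz+O(z^{2})$ so that $p\in\mathcal{H}_{|b|}[1,1]$ after the usual rotation that makes $\beta:=|b|$ positive real, and use the logarithmic-derivative identity
\[\frac{zf''(z)}{f'(z)}+1=p(z)+\frac{zp'(z)}{p(z)}=:\psi(p(z),zp'(z)),\qquad \psi(r,s):=r+\frac{s}{r},\]
to rewrite the hypothesis as $\psi(p(z),zp'(z))\in\Omega$ with $\Omega:=\{w:\RE w<3/2\}$.

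For the subordinant I would take the M\"obius map
\[q(z):=\frac{1+z}{1+cz},\qquad c:=\frac{1-\alpha}{\alpha},\]
which sends $\mathbb{D}$ univalently onto $D_{\alpha}$ with $q(0)=1$ and belongs to $Q$ with $E(q)=\emptyset$, since one checks from \eqref{sec3,eq1} that $\alpha\in(1/2,1)$ and hence $c\in(0,1)$. A direct computation yields $|q'(0)|=(2\alpha-1)/\alpha$ and the convenient formula $\zeta q'(\zeta)/q(\zeta)=(1-c)\zeta/[(1+\zeta)(1+c\zeta)]$.

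The central step is to verify that $\psi$ is $|b|$-admissible for $(\Omega,q)$ in the sense of Definition~\ref{sec1,def2}; namely that
\[\RE\left[q(\zeta)+m\,\frac{\zeta q'(\zeta)}{q(\zeta)}\right]\ \geq\ \frac{3}{2}\]
for every $\zeta=e^{i\theta}\in\partial\mathbb{D}$ and every $m\geq m_{0}:=1+(|q'(0)|-|b|)/(|q'(0)|+|b|)$. After substituting $\zeta=e^{i\theta}$ and collapsing the two M\"obius factors, the left-hand side becomes an explicit real-valued function of the single angular variable $\theta$, which I would then minimise at the critical value $m=m_{0}$. The resulting worst-case inequality should reduce, after simplification, to the quadratic
\[4(|b|+4)\alpha^{2}-3(|b|+6)\alpha+5=0,\]
whose larger root is precisely the $\alpha$ recorded in \eqref{sec3,eq1}. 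This admissibility calculation -- locating the extremal angle, carrying out the trigonometric bookkeeping, and extracting the stated quadratic in $\alpha$ from the resulting expression -- is where I expect the principal technical obstacle.

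Once admissibility is established, Theorem~\ref{sec1,th3} delivers $p\prec q$, which is exactly the disk inclusion $|zf'(z)/f(z)-\alpha|<\alpha$ throughout $\mathbb{D}$. The ``in particular'' conclusion then follows from the elementary geometric inclusion $D_{\alpha}\subset\{w:0<\RE w<2\alpha\}$.
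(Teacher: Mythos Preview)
Your plan matches the paper's proof: same $p$, same $\psi(r,s)=r+s/r$, same $\Omega=\{\RE w<3/2\}$, and your subordinant $q(z)=(1+z)/(1+cz)$ is the paper's $q(z)=\alpha(1-z)/[(\alpha-1)z+\alpha]$ precomposed with $z\mapsto -z$, so the admissibility test is identical. For the step you flagged as the principal obstacle, the paper bypasses a direct minimisation in $\theta$ via the partial-fraction identity
\[
\psi\bigl(q(\zeta),m\zeta q'(\zeta)\bigr)=-1+\frac{(m+2)\alpha-\zeta}{(\alpha-1)\zeta+\alpha}-\frac{m}{1-\zeta}
\]
together with $\RE\dfrac{1}{1-\zeta}=\dfrac{1}{2}$ on $|\zeta|=1$, which collapses the boundary estimate to $(m+2)\alpha-\tfrac{m}{2}\ge\tfrac{3}{2}$; evaluating at $m=m_{0}$ then yields exactly the quadratic $4(|b|+4)\alpha^{2}-3(|b|+6)\alpha+5=0$ (the paper also notes that the hypothesis forces $|b|\le 1/2$, which pins down $\alpha\in[5/8,2/3]$).
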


\begin{proof}
The hypothesis can be written in terms of subordination as
\[\frac{z f''(z)}{f'(z)}+1 \prec \frac{1-2z}{1-z}  \quad ( z\in \mathbb{D})\]
which gives $|b| \leq 1/2$. Also the constant $\alpha$ given by \eqref{sec3,eq1} satisfies the equation
\begin{equation}\label{sec3,eq2}
4(|b|+4)\alpha^{2}-3(|b|+6)\alpha+5=0.
\end{equation}
If $\alpha >2/3$, then we obtain
\[3\sqrt{9|b|^{2}+28|b|+4}>7|b|+10.\]
On solving this, we get $|b|>1/2$ which is a contradiction. Similarly, if we let $\alpha <5/8$, then we obtain $|b|<0$ which is again a contradiction. Thus $\alpha \in [5/8,2/3]$.

Define the function
\[w=q(z):=\frac{\alpha(1-z)}{(\alpha-1)z+\alpha}\quad (z\in {\mathbb{D}}) \]
where $\alpha$ is given by \eqref{sec3,eq1}. As $\alpha \in [5/8,2/3]$, $q$ is analytic   and univalent in $\overline{\mathbb{D}}$. Thus, $q\in {Q}$. Since $q(-1)=2\alpha$ and $q(1)=0$, we see that
\[q(\mathbb{D})=\{w:|w-\alpha|<\alpha\}.\]
Now, define the function $p:\mathbb{D} \rightarrow  \mathbb{C}$ by
\[p(z):=\frac{zf'(z)}{f(z)}.\]
Since $f \in \mathcal{A}_{b}$ and $f$ is starlike (univalent), the function
\[p(z)=1+bz+\cdots\]
is analytic in $\mathbb{D}$. Thus $p \in \mathcal{H}_{b}[1,1]$ and satisfies
\begin{equation}\label{sec3,eq3}
\frac{zf''(z)}{f'(z)}+1=p(z)+\frac{zp'(z)}{p(z)}=\psi(p(z),zp'(z))\quad (z \in \mathbb{D})
\end{equation}
where
\[\psi(r,s):=r+\frac{s}{r}.\]
We claim that $\psi \in \Psi_{b}(\Omega,q)$ where $\Omega=\{w:\RE w<3/2\}$.
The function $\psi$ is continuous in the domain $D=(\mathbb{C} \backslash {\{0\}}) \times \mathbb{C}$, $(1,0)\in {D}$ and
\[\RE  \psi (1,0)=1<3/2\]
so that $\psi (1,0)\in {\Omega}$. We now show that
\[ \RE \psi (q(\zeta),m \zeta q'(\zeta)) \geq \frac{3}{2} , \]
where $|\zeta|=1$ and
\[m \geq 1+\frac{|q'(0)|-|b|}{|q'(0)|+|b|},\quad q'(0)=\frac{1-2\alpha}{\alpha}.\]
Since
  \begin{align*}
                 \psi (q(\zeta),m \zeta q'(\zeta))&=q(\zeta) + m  \frac{\zeta q'(\zeta)}{q(\zeta)}\\
                                                  &=\frac{\alpha(1-\zeta)}{(\alpha-1)\zeta+\alpha}+\frac{m(1-2\alpha)\zeta}{(1-\zeta)[(\alpha-1)\zeta+\alpha]}\\
                                                  &=-1+\frac{(m+2)\alpha-\zeta}{(\alpha-1)\zeta+\alpha}-\frac{m}{1-\zeta},\quad \zeta\neq 1,
  \end{align*}
  we have
  \begin{equation}\label{sec3,eq4}
   \RE \psi (q(\zeta),m \zeta q'(\zeta))= -1+\RE\left(\frac{(m+2)\alpha-\zeta}{(\alpha-1)\zeta+\alpha}\right)-m\RE\frac{1}{1-\zeta}, \quad \zeta \neq 1.
  \end{equation}
Since for  $\alpha \in [5/8,2/3]$, $m\geq 1$,
\[\RE\left(\frac{(m+2)\alpha-\zeta}{(\alpha-1)\zeta+\alpha}\right)\geq (m+2)\alpha +1,\quad  |\zeta|=1,\]
and
\[\RE \frac{1}{1-\zeta}=\frac{1}{2},\quad |\zeta|=1,\quad \zeta \neq 1,\]
we have
  \begin{align*}
    \RE   \psi (q(\zeta),m \zeta q'(\zeta))&\geq -1+\frac{2(m+2)\alpha^{2}-m\alpha-1}{2\alpha-1}-\frac{m}{2}\\
                                           &=(m+2)\alpha-\frac{m}{2}\\
                                           &=\left(\frac{2\alpha-1}{2}\right)m+2\alpha\\
                                           &\geq \left(\frac{2\alpha-1}{2}\right)\left(1+\frac{(2\alpha-1)-|b|\alpha}{(2\alpha-1)+|b|\alpha}\right)+2\alpha\\
                                           &=\frac{2(|b|+4)\alpha^{2}-6\alpha+1}{(2\alpha-1)+\alpha|b|}=\frac{3}{2},
  \end{align*}
  using \eqref{sec3,eq2}.
Thus, $\psi \in \Psi_{b} (\Omega,q)$ where $\Omega=\{w:\RE w<3/2\}$.

From the hypothesis and \eqref{sec3,eq3}, we obtain
\[\psi(p(z),zp'(z))\in {\Omega}\quad (z \in \mathbb{D}).\]
Therefore, by applying Theorem \ref{sec1,th3}, we have
\[p(z)\prec q(z)\quad(z \in \mathbb{D})\]
or equivalently
\[ \left|\frac{zf'(z)}{f(z)}-\alpha\right| <  \alpha \quad (z \in \mathbb{D}).\]
In particular, the above inequality yields the following:
\[ 0<\RE \frac{zf'(z)}{f(z)}<2\alpha \quad (z \in \mathbb{D}).\qedhere\]
\end{proof}

\begin{rem}\label{sec2,rem12}
If $|b|=1/2$ then $\alpha$ given by \eqref{sec3,eq1} simplifies to $2/3$. Thus Theorem \ref{sec3,th1} reduces to \cite[Main theorem]{nunokawa} in this case.
\end{rem}

Another familiar implication is the following  \cite[Theorem 2.6i, p.\ 68]{monograph}:
\[\RE \left(\frac{z f''(z)}{f'(z)}+1\right) >-\frac{1}{2}\quad (z \in \mathbb{D}) \quad \Rightarrow \quad \RE \frac{z f'(z)}{f(z)}>\frac{1}{2}\quad (z \in \mathbb{D}) \]
for a function $f \in \mathcal{A}_{2}$. We generalize this result for a function $f \in \mathcal{A}_{2,b}$.

\begin{thm}\label{sec3,th3}
If $f \in \mathcal{A}_{2,b}$, then the following implication holds:
\[\RE \left(\frac{z f''(z)}{f'(z)}+1\right) >-\frac{1}{2} \quad (z \in \mathbb{D})\quad \Rightarrow \quad \RE \frac{z f'(z)}{f(z)}>\alpha\quad (z \in \mathbb{D}),\]
where $\alpha$ is the smallest positive root of the equation
\begin{equation}\label{sec3,eq5}
2\alpha^{3}+2(1-|b|)\alpha^{2}-(2|b|+7)\alpha+3+|b|=0
\end{equation}
in the interval $[1/2,2/3]$.
\end{thm}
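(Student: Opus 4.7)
The plan is to mirror the architecture of Theorem~\ref{sec3,th1}. Lemma~\ref{lem} applied to the hypothesis $\RE(1 + zf''(z)/f'(z)) > -1/2$ makes $f$ univalent on $\mathbb{D}$, so
\[ p(z) := \frac{zf'(z)}{f(z)} \]
is analytic. Since $f \in \mathcal{A}_{2,b}$ expands as $f(z) = z + bz^3 + \cdots$, a direct calculation gives $p(z) = 1 + 2bz^2 + \cdots$, so (after rotating the variable so that $b \geq 0$) $p \in \mathcal{H}_{2b}[1,2]$. The familiar logarithmic identity yields
\[ 1 + \frac{zf''(z)}{f'(z)} = p(z) + \frac{zp'(z)}{p(z)} = \psi(p(z), zp'(z)), \qquad \psi(r,s) := r + \frac{s}{r}, \]
with $\psi$ continuous on $D := (\mathbb{C}\setminus\{0\}) \times \mathbb{C}$.

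To capture the desired half-plane conclusion $\RE p > \alpha$, I would introduce the M\"obius function
\[ q(z) := \frac{1 + (1-2\alpha)z}{1-z} = \alpha + (1-\alpha)\,\frac{1+z}{1-z}, \]
mapping $\mathbb{D}$ univalently onto $\{w : \RE w > \alpha\}$, with $q(0) = 1$, $q'(0) = 2(1-\alpha)$ and $E(q) = \{1\}$, so $q \in Q$. With $\Omega := \{w : \RE w > -1/2\}$, the hypothesis becomes $\psi(p(z), zp'(z)) \in \Omega$, and the aim is to show $\psi \in \Psi_{2, 2b}(\Omega, q)$; then Theorem~\ref{sec1,th3} delivers $p \prec q$, equivalently $\RE (zf'/f) > \alpha$.

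The substance of the argument is admissibility condition~(iii) of Definition~\ref{sec1,def2}: for every $\zeta \in \partial \mathbb{D} \setminus \{1\}$ and every
\[ m \;\geq\; m_0 := 2 + \frac{(1-\alpha) - b}{(1-\alpha) + b}, \]
verify $\RE \psi(q(\zeta), m\zeta q'(\zeta)) \leq -1/2$. Since $\RE q(\zeta) = \alpha$ on this boundary and $\psi(q(\zeta), m\zeta q'(\zeta)) = q(\zeta) + m\,\zeta q'(\zeta)/q(\zeta)$ is affine in $m$, an explicit computation with $q$ gives $\RE(\zeta q'(\zeta)/q(\zeta)) < 0$ on $\partial \mathbb{D} \setminus \{1\}$, so the real part is strictly decreasing in $m$ and it suffices to test at $m = m_0$. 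Parametrizing $\zeta = e^{i\theta}$ and setting $u = \sin^2(\theta/2) \in (0,1]$, the quantity $|\RE(\zeta q'(\zeta)/q(\zeta))|$ becomes a rational function of $u$ whose minimum (for $\alpha \in [1/2, 2/3]$) is attained at $u = 1$ and equals $(1-\alpha)/(2\alpha)$. The admissibility inequality at the worst boundary point then collapses to $m_0 \cdot (1-\alpha)/(2\alpha) \geq \alpha + 1/2$, which after clearing denominators becomes exactly
\[ 2\alpha^3 + 2(1-b)\alpha^2 - (2b+7)\alpha + (3+b) \geq 0. \]
Taking $\alpha$ to be the smallest positive root of \eqref{sec3,eq5} in $[1/2, 2/3]$ produces equality at the worst boundary point, hence admissibility.

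The main obstacle is this algebraic reduction: identifying the minimum over the boundary parameter, coupling it with the admissibility threshold $m_0$, and recognizing the resulting polynomial as \eqref{sec3,eq5}. Everything else---Lemma~\ref{lem} for univalence, the logarithmic identity, the choice of $q$ and $\Omega$, the final invocation of Theorem~\ref{sec1,th3} to conclude $p \prec q$---is routine within the framework developed earlier in the paper.
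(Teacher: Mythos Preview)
Your proposal is correct and is essentially the paper's own argument in a different normalization: the paper shifts $p$ to $zf'/f-\alpha\in\mathcal{H}_{2|b|}[1-\alpha,2]$ and invokes the half-plane admissibility criterion \eqref{sec1,eq1} via Theorem~\ref{sec1,th4}(i), whereas you keep $p=zf'/f$ and shift $q$ instead, then check Definition~\ref{sec1,def2}(iii) directly; the boundary minimization (at $\zeta=-1$, equivalently $\rho=0$ in the paper) and the resulting cubic \eqref{sec3,eq5} are identical in both routes. One small point implicit in both treatments is the hypothesis $|q'(0)|\geq\beta$, i.e.\ $|b|\leq 1-\alpha$, which does hold since the cubic is nonpositive at $\alpha=1-|b|$.
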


\begin{proof}First note that in terms of subordination the hypothesis can be written as
\[\frac{z f''(z)}{f'(z)}+1 \prec \frac{1+2z}{1-z} \quad (z\in \mathbb{D})\]
which gives $|b| \leq 1/2$. Also, the function $g$ defined by
\[g(\alpha):=2\alpha^{3}+2(1-|b|)\alpha^{2}-(2|b|+7)\alpha+3+|b|\]
is continuous in $[1/2,2/3]$ and satisfies
\begin{align*}
    g\left(\frac{1}{2}\right)&=\frac{1}{4}(1-2|b|) \geq 0,\quad \mbox{and}\\
    g\left(\frac{2}{3}\right)&=-\frac{1}{27}(5+33|b|) \leq 0,
\end{align*}
as $|b| \leq 1/2$.
Therefore by Intermediate Value Theorem, there exists a root of $g(\alpha)=0$ in $[1/2,2/3]$. (In fact, $\alpha \in [0.5,\sqrt{2.5}-1]\simeq[0.5,0.58]$.)

Define the function $p:\mathbb{D} \rightarrow  \mathbb{C}$ by
\begin{equation}\label{sec3,eq6}
p(z):=\frac{zf'(z)}{f(z)}-\alpha,
\end{equation}
where $\alpha$ is the smallest positive root of \eqref{sec3,eq5}.
Since $f \in \mathcal{A}_{2,b}$ and $f$ is univalent, the function
\[p(z)=(1-\alpha)+2bz^{2}+\cdots\]
is analytic in $\mathbb{D}$. Thus $p \in \mathcal{H}_{2b}[1-\alpha,2]$ and as $\alpha \leq 2/3$ we readily see that
\[\RE p(0)=1-\alpha >0.\]
From \eqref{sec3,eq6}, we obtain
\[\frac{zf'(z)}{f(z)}=p(z)+\alpha\]
so that
\[\frac{zf''(z)}{f'(z)}+1=p(z)+\alpha+\frac{zp'(z)}{p(z)+\alpha}=\psi(p(z),zp'(z))\quad (z \in \mathbb{D})\]
where
\[\psi(r,s):=r+\alpha+\frac{s}{r+\alpha}.\]
We need to apply Theorem \ref{sec1,th4} to conclude that $\RE p(z) >0$. If we let
\[\Omega=\{w:\RE w >-1/2\}\]
then by hypothesis, we have
\[\{\psi(p(z),zp'(z)):z \in \mathbb{D}\} \subset \Omega.\]
To apply Theorem \ref{sec1,th4}, we need to show that $\psi \in \Psi_{2,2b}(\Omega,1-\alpha)$. The function $\psi$ is continuous in the domain $D=(\mathbb{C}\backslash \{-\alpha\})\times \mathbb{C}$, $(1-\alpha,0) \in D$ and
\[\RE \psi(1-\alpha,0)=1 >0.\]
We now show that the admissibility condition \eqref{sec1,eq1} is satisfied. Since
\[\psi(i\rho,\sigma)=i \rho+\alpha+\frac{\sigma}{\alpha^{2}+\rho^{2}}(\alpha-i\rho)\]
we have
\begin{align*}
\RE \psi(i\rho,\sigma)&=\alpha+\frac{\alpha \sigma}{\alpha^{2}+\rho^{2}}\\
                        &\leq \alpha-\frac{1}{2}\frac{\alpha}{\alpha^{2}+\rho^{2}}\left(2+\frac{2(1-\alpha)-2|b|}{2(1-\alpha)+2|b|}\right)\frac{(1-\alpha)^{2}+\rho^{2}}{1-\alpha}\\
                        &=\alpha-\frac{1}{2}\frac{\alpha}{1-\alpha}\frac{3(1-\alpha)+|b|}{(1-\alpha)+|b|}\frac{(1-\alpha)^{2}+\rho^{2}}{\alpha^{2}+\rho^{2}}.
\end{align*}
Using \eqref{sec3,eq5} and from the monotonicity of the function
\[h(t)=\frac{(1-\alpha)^{2}+t}{\alpha^{2}+t},\quad t \geq0,\]
it follows that
\begin{align*}
\RE \psi(i\rho,\sigma)&\leq \alpha-\frac{1}{2}\frac{1-\alpha}{\alpha}\frac{3(1-\alpha)+|b|}{(1-\alpha)+|b|}\\
                        &=\frac{(2|b|-1)\alpha^{2}-2\alpha^{3}+(6+|b|)\alpha-3-|b|}{2\alpha[(1-\alpha)+|b|]}=-\frac{1}{2}.
  \end{align*}
whenever $\rho \in \mathbb{R}$ and
\[\sigma \leq -\frac{1}{2}\left(2+\frac{2\RE p(0)-\beta}{2\RE p(0)+\beta}\right)\frac{|p(0)-i\rho|^{2}}{\RE p(0)},\quad p(0)=1-\alpha, \quad \beta=2|b|.\]
Thus $\psi \in \Psi_{2,2b}(\Omega,1-\alpha)$. Therefore, by applying part $(i)$ of Theorem \ref{sec1,th4} we conclude that $p$ satisfies
\[\RE p(z) >0 \quad ( z \in \mathbb{D}).\]
This is equivalent to
\[\RE \frac{z f'(z)}{f(z)}>\alpha\quad (z \in \mathbb{D}).\]
where $\alpha$ is the smallest positive root of \eqref{sec3,eq5}.
\end{proof}

\begin{rem}\label{sec3,rem4}
If $|b|=1/2$ then \eqref{sec3,eq5} becomes
\[2\alpha^{3}+\alpha^{2}-8\alpha+\frac{7}{2}=0\]
which simplifies to
\[(2\alpha-1)\left(\alpha^{2}+\alpha-\frac{7}{2}\right)=0.\]
As $\alpha \in [1/2,2/3]$, we get $\alpha=1/2$. Thus Theorem \ref{sec3,th3} reduces to Theorem 2.6i in \cite[p.\ 68]{monograph} in this case.
\end{rem}

\subsection*{Acknowledgements}
The research work of the first author is supported by research fellowship from Council of Scientific and Industrial Research (CSIR), New Delhi. The authors are thankful to the referee for his useful comments.

\end{document}